\newtheorem{thm}{Theorem}
\newtheorem{lemma}{Lemma}
\newtheorem{corollary}{Corollary}
\begin{document}

\title{The Structure of Stable Vector Fields on Surfaces}
\author{John Berman, Sergei Bernstein}

\maketitle

\begin{abstract}
The Poincare-Hopf theorem tells us that given a smooth, structurally
stable vector field on a surface of genus $g$, the number of saddles
is $2-2g$ less than the number of sinks and sources. We
generalize this result by introducing a more complex combinatorial invariant. Using this tool, we demonstrate that
many such structurally stable vector fields are equivalent up to a set
of basic operations. We show in particular that for the sphere,
all such vector fields are equivalent. 
\end{abstract}

\section{Introduction}
In order to study combinatorial aspects of vector fields, we restrict our attention to surfaces that are compact, connected, and orientable. The Poincare-Hopf Theorem yields a simple combinatorial invariant of a vector field on such a surface; in particular, it tells us that $$U+I-A=2-2g$$ where we write $U, I, A$ for the numbers of sources, sinks, and saddles, respectively, and $g$ for the number of holes in $X$, also known as the genus of $X$. 

We seek to extend this simple result to a more powerful combinatorial invariant in the form of a graph embedded on $X$, with edges representing integral curves of the vector field. In order to accomplish this, we must restrict our attention to structurally stable vector fields, or vector fields whose geometric behavior is unchanged by small perturbations.

We show that our graph invariant is a triangulation of $X$, and, using this machinery, we seek to describe classes of vector fields that may be deformed into each other via simple, local operations we term vector field cobordisms. In our main result, we show that all structurally stable vector fields on the sphere are cobordant in this sense, and we discuss the extent to which this holds true for surfaces of higher genus. Unfortunately, for surfaces of higher genus, the existence of periodic integral curves hampers the project, so we leave the readers with some open questions.

\section{The Graph Invariant}

\subsection{Primitive and Generic Vector Fields}
Consider a smooth, structurally stable vector field on a (compact, connected, orientable) surface. By Peixoto's theorem \cite{P}, structural stability is equivalent to a simple set of properties, as follows. 

\begin{itemize}

\item The set of non-wandering points consists only of periodic orbits
  and fixed points. In other words, an integral curve becomes either a periodic orbit or a fixed point in the limit.

\item The set of fixed points is finite and consists only of
  hyperbolic equilibrium points. In particular, there are saddles of degree $4$, sinks, and sources. 

\item There are a finite number of attracting or repelling periodic orbits.

\item There are no saddle-saddle connections.

\end{itemize}

We shall call such a vector field {\it generic}, and in particular a
generic vector field with no such periodic orbits {\it primitive}. The
fixed points consist of sinks, sources, and saddles, the numbers of
which we will denote throughout $I$, $U$, and $A$, respectively. 

\subsection{The Invariant}
Suppose we have a primitive vector field $v$ on a surface $X$. As it
has a finite number of fixed points, it is  natural to construct a
graph $G(v)$ (or just $G$ if $v$ is known from context) with the
vertices corresponding to the fixed points of $v$. We then say that
there is a directed edge from $A$ to $B$ for each homotopy class of
integral curves that start at $A$ and end at $B$. We notice a few
properties 

\begin{itemize}

\item There is a natural way to pick an isotopy class of embeddings of $G$ on $X$.

\item By structural stability of $v$, $G$ must be 3-colorable with the
  colors corresponding to sources, sinks, and saddles.

\item Also by structural stability, each saddle point must have in-degree 2
  and out-degree 2.

\end{itemize}

This graph $G(v)$ will be our primary tool for studying vector
fields. 

If we remove all the sources and associated edges from $G$, we get a
new graph, which we denote $G_I(v)$ (or just $G_I$). We notice that in
$G_I$, every saddle connects to exactly $2$ sinks. Therefore, we can
construct yet another (undirected) graph $\tilde{G}_I(v)$ where the
vertices are the sinks of $G_I$, and there is an edge between two (not
necessarily distinct) sinks for each saddle that is connected to both
of those sinks. We can construct similar graphs $G_U(v)$ and
$\tilde{G}_U(v)$ by interchanging sinks and sources. It is immediately
clear by definition that $\tilde{G}_I$ consists of $I$ vertices and
$A$ edges, and that $\tilde{G}_U$ consists of $U$ vertices and
$A$ edges. Furthermore, all of these graphs may be naturally embedded on $X$, as $G$ itself may be embedded on $X$.  

We now present some examples in Figures \ref{fig:SphereExample} through \ref{fig:SinkGraph}. Note that we represent vector fields by their flows. 

\begin{figure}
  \centering
  \includegraphics[width=40mm,height=40mm]{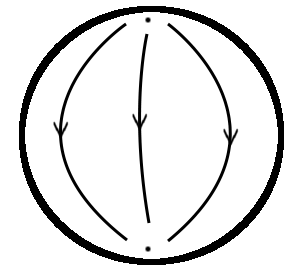}
  \caption{An example of a primitive vector field on a sphere}
  \label{fig:SphereExample}
\end{figure}

For the vector field in Figure \ref{fig:SphereExample}, $G$ consists of a source, a sink, and a
directed edge from the source to the sink. The graphs $G_U$ and $\tilde{G}_U$ are each just a single vertex.

\begin{figure}[h!]
  \centering
  \includegraphics[width=40mm]{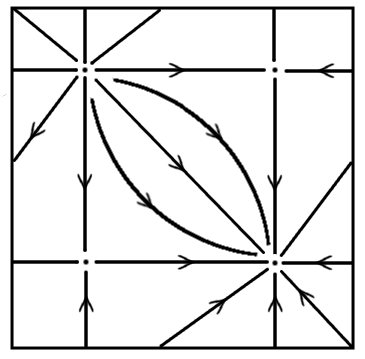}
  \caption{An example of a primitive vector field on a torus}
  \label{fig:TorusExample}
\end{figure}

\pagebreak

\begin{figure}
  \centering
  \includegraphics[width=40mm]{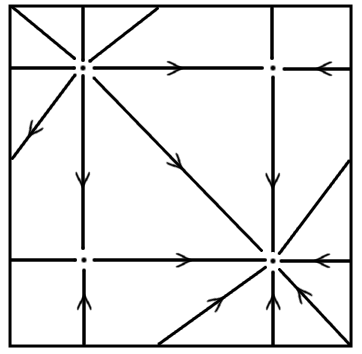}
  \caption{$G$ in the previous example}
  \label{fig:TorusExampleGraph}
\end{figure}

\begin{figure}
  \centering
  \includegraphics[width=40mm]{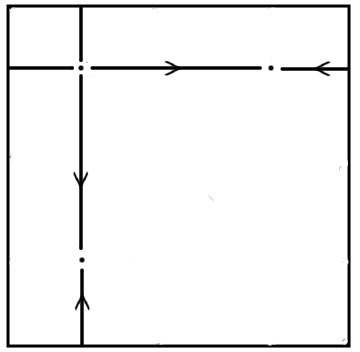}
  \caption{$G_U$ in the previous example}
  \label{fig:TorusExampleSaddle}
\end{figure}

\begin{figure}[h!]
  \centering
  \includegraphics[width=40mm]{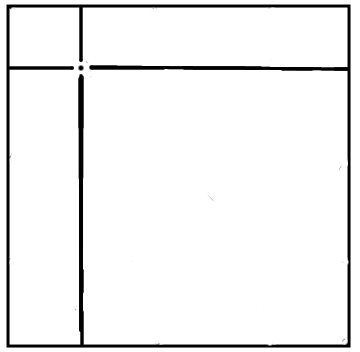}
  \caption{$\tilde{G}_U$ in the previous example}
  \label{fig:TorusExampleSource}
\end{figure}

In Figure \ref{fig:TorusExample}, we present an example of a primitive vector field, $v_T$, on a torus. In this figure, we draw the torus as a square whose opposite edges are glued together. Figure \ref{fig:TorusExampleGraph} shows a natural embedding of $G(v_T)$ on the torus. Figures \ref{fig:TorusExampleSaddle} and \ref{fig:TorusExampleSource} are embeddings of $G_U(v_T)$ and $\tilde{G}_U(v_T)$, respectively.

\pagebreak

\begin{figure}
  \centering
  \includegraphics[width=60mm]{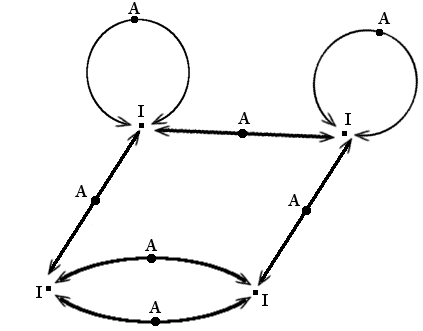}
  \caption{A non-trivial example of $G_I$ the sphere (stereographically projected to the plane)}
  \label{fig:SaddleGraph}
\end{figure}

\begin{figure}
  \centering
  \includegraphics[width=60mm]{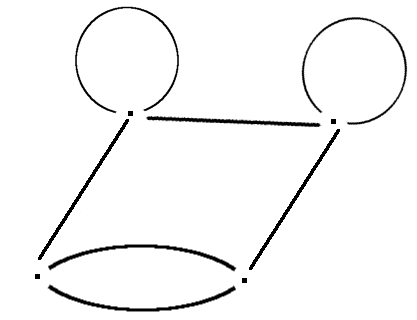}
  \caption{$\tilde{G}_I$ in the previous example}
  \label{fig:SinkGraph}
\end{figure}

In Figure \ref{fig:SaddleGraph}, we give an example of a possible value of $G_I(v)$ on the sphere. For the sake of clarity, we have stereographically projected the sphere onto the plane. Figure \ref{fig:SinkGraph} then gives us a possible value of $\tilde{G}_I(v)$ on the sphere. 

\pagebreak

\subsection{Conditions on $G(v)$}

Given an embegging of a graph into $X$, we define a region to be a connected component of the complement of that graph in $X$.

\begin{thm}
Given an embedding of the graph $\tilde{G}_I$ ($\tilde{G}_U$) in $X$,
\begin{itemize}

\item In each region, there is exactly one source (sink) when the vertices of $G$ are added.

\item Each region (taken as an open set) is diffeomorphic to a open
  set of $\mathbb{R}^2$. 

\item The graph $\tilde{G}_I$ is connected.

\end{itemize}
\label{thm:GConnect}
\end{thm}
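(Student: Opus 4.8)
The plan is to identify the regions cut out by an embedded copy of $\tilde{G}_I$ with the repelling basins of $v$, and then to extract all three conclusions from that identification --- the last with the help of one soft topological fact about the frontier of an embedded disk.

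I would first reduce to the \emph{natural} embedding of $\tilde{G}_I$, in which each edge is drawn along the two actual unstable separatrices of its saddle; since the three assertions are invariant under isotopy of the embedding, this costs nothing. For this embedding the point set occupied by $\tilde{G}_I$ in $X$ is exactly the sinks, the saddles, and the unstable separatrices of the saddles: an unstable separatrix is a nonperiodic integral curve whose forward limit set, by primitivity, is a single fixed point, which can be neither a source nor a saddle (the latter by the absence of saddle--saddle connections), hence is a sink; so together with its saddle each separatrix pair forms the arc realizing one edge. Therefore a point lying off $\tilde{G}_I$ is not a sink, not a saddle, and not on any unstable separatrix, so its $\alpha$-limit set --- again a single fixed point, and necessarily a source --- makes the point lie in the repelling basin $B(q)=\{x:\ \text{the backward orbit of }x\text{ converges to }q\}$ of some source $q$; conversely no sink and no point of an unstable separatrix lies in any such basin. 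Hence $X\setminus\tilde{G}_I=\bigsqcup_q B(q)$, a disjoint union of open sets indexed by the sources (of which there is at least one, since a full-measure set of points has a source as $\alpha$-limit).

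The local dynamics near $q$ then settles the first two bullets. Choosing a small closed disk $D\ni q$ whose boundary is transverse to the flow with the flow pointing outward --- possible by hyperbolicity of $q$ --- one gets $\phi_{-1}(D)\subset\operatorname{int} D$ and $\bigcap_{t\ge0}\phi_{-t}(D)=\{q\}$, so the closed disks $K_t:=\phi_t(D)$ satisfy $K_t\subset\operatorname{int} K_{t+1}$ and $\bigcup_{t\ge0}K_t=B(q)$. Thus $B(q)$ is a connected open surface exhausted by disks; in particular it is simply connected, hence planar, and in fact diffeomorphic to $\mathbb R^2$, so certainly to an open subset of $\mathbb R^2$. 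And $B(q)$ contains exactly one source, since any source other than $q$ has itself, not $q$, as its backward limit. So the regions of $\tilde{G}_I$ are precisely the basins $B(q)$, one source apiece.

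For the connectivity of $\tilde{G}_I$ I would use the exhaustion once more: $B(q)\setminus\operatorname{int} K_t$ is connected (a half-open annulus), so its closure in $X$ is compact and connected, and these closures decrease to $\partial B(q)=\overline{B(q)}\setminus B(q)$, which is therefore connected (and nonempty, $B(q)$ being a proper open subset of the connected surface $X$). Now form the bipartite incidence graph $\Gamma$ whose nodes are the regions $B(q)$ and the connected components $H_1,\dots,H_k$ of $\tilde{G}_I$, joining $B(q)$ to $H_j$ whenever $\partial B(q)$ meets $H_j$. Each $\partial B(q)$, being connected, lies in a single $H_j$, so every region-node of $\Gamma$ has degree exactly $1$; and $\Gamma$ is connected, for otherwise the union of the closures of the regions and the components on one side of a separation would be a nonempty proper closed subset of $X$ --- closed because the frontier of each of its regions stays, being connected, inside a single one of its components --- contradicting connectedness of $X$. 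But a connected bipartite graph in which every node on one side has degree $1$ has only one node on the other side; hence $k=1$ and $\tilde{G}_I$ is connected. The statements for $\tilde{G}_U$ follow by running everything for $-v$, which is again primitive and interchanges sources with sinks. I expect the connectedness of the frontier $\partial B(q)$ to be the single load-bearing step --- it is exactly there that the flow is needed, to produce a disk exhaustion of each region --- the rest being either routine point-set topology or a direct appeal to Peixoto's description of $v$; as a sanity check, the face count $F=U$ fed into $V-E+F=\chi(X)$ reproduces Poincare-Hopf, $U+I-A=2-2g$.
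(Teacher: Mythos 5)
Your argument is correct, but it takes a genuinely different route from the paper's. The paper proves all three bullets at once by counting: each region contains at least one source (a sink on its boundary must be fed by a source inside the region), so the face count of $\tilde{G}_I$ satisfies $f_I\le U$, while the Euler inequality $v_I+f_I\ge e_I+2-2g$ holds with equality exactly when every region is planar and the graph is connected; sandwiching these against the Poincar\'e--Hopf identity $U+I-A=2-2g$ forces both equality cases simultaneously. You instead argue dynamically: for the natural embedding, the complement of $\tilde{G}_I$ is the disjoint union of the repelling basins of the sources, each basin is exhausted by flow-images of a small disk transverse to the flow (hence is diffeomorphic to $\mathbb{R}^2$ and contains exactly one source), and the frontier of each basin is a decreasing intersection of compact connected sets, hence connected and contained in a single component of the graph, after which a bipartite incidence argument plus connectedness of $X$ forces one component. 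Your version buys more: it identifies the regions with basins (a useful picture for the later triangulation and duality statements), shows each region is an open disk rather than merely an open planar set, and makes no use of Poincar\'e--Hopf --- indeed, as you note, feeding $F=U$ into Euler's formula re-derives it --- at the cost of being longer and leaning on standard facts about hyperbolic sources and limit sets of structurally stable flows. Two small points of phrasing rather than substance: the theorem must be read as referring to the natural embedding (your isotopy remark is the right way to say this, provided the added vertices of $G$ are carried along by the isotopy); and in the connectedness-of-$\Gamma$ step, a nonempty proper closed subset alone does not contradict connectedness of $X$ --- you need that both sides of the separation yield closed sets that are disjoint and cover $X$, which your degree-one observation does supply, since a point shared by closures from the two sides would lie on a frontier meeting a component across the separation and hence force a forbidden edge of $\Gamma$.
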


\begin{proof} 
Each region must contain at
least one source, as each region has some sink on its boundary which
must connect to a source in the region via an integral curve. Say that
there are $v_I$, $e_I$, $f_I$ vertices, edges, faces respectively in
$\tilde{G}_I$. This means that $v_I=I$, $e_I=A$, and $f_I\le U$, with
equality if and only if there is exactly one source of $G$ in each region. Letting $g$ denote
the genus of $X$, we can write the Poincare-Hopf result
as $$I+U=A+2-2g$$ and Euler's result as $$v_I+f_I\ge e_I+2-2g$$
with equality only when each region is diffeomorphic to an open set of $\mathbb{R}^2$ and $\tilde{G}_I$ is connected. Combining our observations results in $$2-2g\le v_I+f_I-e_I\le U+I-A=2-2g$$ Thus, both equalities cases hold, proving the theorem. Of course the same argument can be made for
$\tilde{G}_U$.
\end{proof}

From the first point of Theorem \ref{thm:GConnect} and the observation that saddles
have in degree 2 and out degree 2, it follows immediately that
$\tilde{G}_I$ and $\tilde{G}_U$ are dual graphs on $X$. 

Suppose that $v$ has at least one saddle. Let us look at a single
region in $G_I(v)$. As $v$ contains a saddle, $G_I$ contains an
edge. Suppose that the region is bounded by the edges $e_1, \ldots,
e_n$ in order (edges may repeat, but would then be oriented
differently). Let us examine an edge $e_i$. Then it is not hard to see that both vertices of $e_i$ must define a homotopy class of integral curves
(with the correct orientation) connecting to the single source in the
region, for they have nowhere else to connect. This gives us a triangle between the single source and these two vertices. Hence, when $G$ is
reconstructed, the particular region of $G_I$ is partitioned into
triangles. It follows that $G$ is a triangulation of the entire
manifold.

\begin{corollary}
As long as $v$ has at least one
saddle, $G$ is a triangulation of $X$. Note that by the word
triangulation, we dictate not only that each region be triangular, but
also that each region be diffeomorphic to an open set of
$\mathbb{R}^2$, as in Theorem \ref{thm:GConnect}.
\label{cor:triangulation} 
\end{corollary}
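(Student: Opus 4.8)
The plan is to make rigorous the argument sketched in the paragraph preceding the corollary, reconstructing $G$ from $\tilde{G}_I$ region by region and checking that each region of $\tilde{G}_I$ gets subdivided into triangles whose closures are embedded disks. First I would fix a region $R$ of the embedding of $\tilde{G}_I$ in $X$. By Theorem \ref{thm:GConnect}, $R$ is diffeomorphic to an open subset of $\mathbb{R}^2$ and contains exactly one source $s$ of $G$ when the vertices of $G$ are reinstated; since $v$ has at least one saddle and $\tilde{G}_I$ is connected with $A \ge 1$ edges, I would first argue that $R$ is in fact an open disk whose boundary is a cyclic sequence of edges $e_1,\dots,e_n$ of $\tilde{G}_I$ (traversed with multiplicity), each $e_i$ corresponding to a saddle $\sigma_i$ lying in $\partial R$, with sink endpoints $a_i, b_i$ (possibly equal, possibly repeated around the cycle).

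Next I would analyze a single boundary saddle $\sigma_i$. By structural stability $\sigma_i$ has out-degree $2$, and its two outgoing separatrices cannot terminate at sinks (those are the edges of $\tilde G_I$ already accounted for in the cut-open picture) nor at other saddles (no saddle-saddle connections), so both must flow to sources; since $R$ is the region locally on the source side of $\sigma_i$, both outgoing separatrices of $\sigma_i$, on the side facing $R$, must limit to the unique source $s$ in $R$. Dually, following the two incoming separatrices of $\sigma_i$ backwards lands at sources, but on the side facing into $R$ these again must be $s$; I would instead use the outgoing ones together with the edge $e_i$ of $\tilde G_I$ to produce, inside $\overline R$, a closed curve consisting of: the integral curve from $s$ to (a point limiting onto) $\sigma_i$, then along $e_i$ realized as a pair of separatrices through its two sink endpoints $a_i,b_i$. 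Concretely this bounds a triangle $T_i$ with vertices $s$, $a_i$, $b_i$ and the three $G$-edges being the source–saddle separatrix split into the $s\to a_i$ and $s\to b_i$ pieces together with the saddle's stable/unstable structure — I would spell out that the two homotopy classes of integral curves $s\to a_i$ and $s\to b_i$ arising here are exactly the $G$-edges forced by "they have nowhere else to connect," and that distinct $e_i$ give triangles with disjoint interiors covering $R$.

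Finally I would assemble: the $n$ triangles $T_1,\dots,T_n$ tile $\overline R$, meeting only along the spokes $s\to(\text{sink})$, so $R$ is partitioned into triangular faces each diffeomorphic to an open disk; doing this for every region of $\tilde G_I$ and invoking that $\tilde G_I$ already covers all sinks and saddles while the sources are exactly the added vertices, one per region, shows every face of $G$ is a triangle with disk interior, i.e. $G$ is a triangulation of $X$ in the strong sense demanded. I expect the main obstacle to be the degenerate combinatorics: handling regions whose boundary edge $e_i$ is a loop at a single sink, or where the same saddle or sink appears several times around $\partial R$, and checking in those cases that the "triangles" $T_i$ still have embedded (not merely immersed) closures and genuinely disjoint interiors — this is where the local hyperbolic picture at each saddle and the disk structure of $R$ from Theorem \ref{thm:GConnect} must be used carefully rather than just combinatorially.
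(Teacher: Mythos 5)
Your plan is essentially the paper's own argument: the paper fixes a region of $G_I$ (equivalently of $\tilde{G}_I$, since the degree-$2$ saddles merely subdivide its edges), uses Theorem \ref{thm:GConnect} to get a unique source $s$ in that region, and observes that each boundary vertex must connect to $s$ ``for it has nowhere else to connect,'' so the region is tiled by source--saddle--sink triangles; your extra attention to loops and repeated boundary vertices is a refinement of, not a departure from, this route. Two slips to clean up when writing it out: the separatrices of a boundary saddle $\sigma_i$ that reach $s$ are the \emph{incoming} ones (its outgoing separatrices are precisely the saddle--sink edges of $G_I$), and your $T_i$ with vertices $s, a_i, b_i$ is not itself a face of $G$ but the union of the two genuine faces $s\,\sigma_i\,a_i$ and $s\,\sigma_i\,b_i$ glued along the edge from $s$ to $\sigma_i$, which is why the paper works with $G_I$ rather than $\tilde{G}_I$ and gets the triangles directly.
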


Of course, $G$ will also satisfy some additional trivial properties. As $v$ is structurally stable, with no saddle-saddle connections, $G$ will be $3$-colorable by the ``colors'': sink, source, and saddle. In fact, each sink will have out-degree 0, each source in-degree 0, and each saddle both in-degree and out-degree 2. Call a graph that satisfies all of these properties {\it saddled}. It happens that $G$ is characterized by being a saddled triangulation.

\begin{thm}
Given a graph $G$ on a surface $X$, there is a primitive vector field $v$ satisfying $G=G(v)$ if and only if $G$ is a saddled triangulation of $X$.
\end{thm}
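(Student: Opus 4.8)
The plan is to prove both implications, with essentially all of the work in the ``if'' direction. The ``only if'' direction is already in hand: the paragraph preceding the statement, together with Corollary~\ref{cor:triangulation}, shows that $G(v)$ is a saddled triangulation whenever $v$ is primitive and has at least one saddle; since the statement tacitly presupposes this (the saddle-free case forces $g=0$, $I=U=1$, a lone source--sink arc, which one treats by hand), nothing further is needed there. So I would assume $G$ is a saddled triangulation of $X$ with at least one triangle and construct a primitive $v$ with $G(v)=G$.

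First I would read off the local combinatorics forced by ``saddled''. Because the colouring sink/source/saddle is proper and every saddle has in-degree and out-degree $2$, every edge joins two distinct colours, and a short argument (a proper colouring admits no self-loop) shows that every triangular face has three distinct vertices, one of each colour; write such a face as $(U,A,I)$, so the edges come in exactly the three types $UA$, $AI$, $UI$. A saddle has degree $4$ and hence lies on exactly four faces, and at the saddle-corner each incident face contributes one $UA$-edge (an in-edge) and one $AI$-edge (an out-edge); therefore the four edges at a saddle alternate in/out and the four faces sit around it exactly like the four quadrants of a hyperbolic saddle. The same bookkeeping shows that around a sink the incident edges alternate between $UI$-type and $AI$-type, and around a source between $UI$-type and $UA$-type. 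These alternation properties are precisely the compatibility conditions needed to glue standard local models.

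Next I would build $v$ face by face. On a model triangle with corners $P_U,P_A,P_I$ I fix a vector field whose interior orbits all run from $P_U$ (a local source sector) to $P_I$ (a local sink sector), with $P_A$ a hyperbolic-saddle quadrant, and with all three sides invariant --- the two sides at $P_A$ being separatrices and the side $P_UP_I$ the closure of a single orbit. I then transport this picture onto every face of $G$, choosing the pieces so that along a shared edge the two fields agree together with their transverse jets (using a collar of each edge and cut-off functions, or a single template depending only on the local colour data). The alternation facts above guarantee that the four pieces around a saddle assemble into a genuine smooth hyperbolic saddle and that the pieces around each sink (source) assemble into a genuine hyperbolic attracting (repelling) node; $C^\infty$-smoothness along edges and at vertices is then arranged by the usual collaring.

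Finally I would verify the two claims. By construction the fixed points of $v$ are exactly the vertices of $G$, hyperbolic and of the type dictated by the colouring, and every other orbit lies in a single closed face, either running from that face's source to its sink or running along an edge as a saddle separatrix; hence there are no periodic orbits (so $v$ is primitive, a fortiori generic), the non-wandering set is just the fixed points, and there are no saddle--saddle connections. The union of the separatrices and the side-orbits is the embedded graph $G$ itself, and since every complementary region is an open disk the integral curves joining a given ordered pair of vertices fall into exactly one homotopy class per edge of $G$, so $G(v)=G$ up to the natural isotopy. I expect the gluing to be the only real obstacle: fitting the face-local models together $C^\infty$ across edges and, in particular, around the saddles and the nodes, and checking that the edge $\leftrightarrow$ homotopy-class count comes out exactly right; the remaining verifications are soft.
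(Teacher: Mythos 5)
Your proposal is correct and follows essentially the same route as the paper: the forward direction is Corollary~\ref{cor:triangulation}, and the converse is proved by embedding $G$ smoothly, noting each triangular face has one source, one saddle, and one sink, and filling each face with the standard model flow of Figure~\ref{fig:Triangle}. You simply supply more detail than the paper does (the alternation of edge types around each vertex and the smooth collaring along shared edges), together with a sensible aside on the saddle-free case, which the paper leaves implicit.
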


\begin{proof} Corollary \ref{cor:triangulation} proves the forward direction. We shall now show the other direction. Suppose $G$ is a
saddled triangulation of $X$. We can embed $G$ into $X$ in such a way
that all the edges are smooth curves. This splits the surface into some triangular regions. We note that because $G$ is a saddled triangulation, the vertices of each triangle must be a source, a sink, and a saddle. It is sufficient to specify the flow in each triangular region. We can just specify the flow to be diffeomorphic to the one in Figure \ref{fig:Triangle}.
\end{proof}

\begin{figure}
  \centering
  \includegraphics[width=40mm]{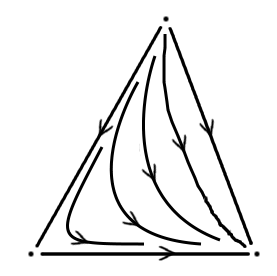}
  \caption{An example of a smooth flow within a triangular region}
  \label{fig:Triangle}
\end{figure}

\subsection{Non-primitive vector fields}

Suppose now that $v$ does have periodic integral curves. We can think of each periodic curve as dividing a small open set around it into two regions. Consider its behavior in one of the regions. The curve acts either as a `sink' or a `source,' with integral curves either progressing away from it or towards it in its neighborhood. In fact, within this region, we can close off the integral curve (or associate it with a single point) in such a way that it truly does define either a sink or a source (and no periodic curve) (see Figure \ref{fig:PSink}).

Stated more clearly, suppose there exist periodic integral curves. Ennumerate the regions $R_1, \ldots, R_n$ into which they divide $X$. In each region, the bounding integral curves may be associated with (closed off to yield) points, with resulting manifolds $X_1, \ldots, X_n$ and derived vector fields $v_1, \ldots, v_n$. Each of the derived vector fields exist on the corresponding manifold and are equivalent to $v$ except that the original periodic curves are now associated with either sinks or sources. In particular, this means that each $v_i$ is primitive, and therefore the results of the previous sections hold.

\begin{figure}
  \centering
  \includegraphics[width=100mm]{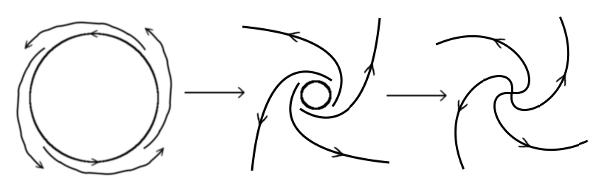}
  \caption{A picture to clarify why a periodic curve can be viewed as a source (or a sink)}
  \label{fig:PSink}
\end{figure}

\section{Vector Field Cobordisms}

\subsection{Definitions}

The object of this section is to describe elementary local transformations of vector fields, which we  call {\it vector field cobordisms}. We define two classes of cobordisms for primitive vector fields. Given two sources, each connecting to a given saddle, we can smoothly deform the vector field in such a way that the sources and the saddle are `pushed' together. Outside of a small neighborhood of the three points, the entire neighborhood acts as a source (see Figure \ref{fig:CobordismProcess}), with the vector field emanating away from the region. Thus we define a cobordism to be the transformation from this vector field to another where the three points have been identified with a single source. Of course, the reverse transformation is also a cobordism. The second class of cobordisms we define takes two sinks and a saddle to a sink in an identical way. We call these source-cobordisms and sink-cobordisms, respectively.

We say that two vector fields are {\it cobordant} if there is a series of cobordisms and isotopies that takes one to the other. This gives us {\it cobordism classes} of vector fields.

\begin{figure}
  \centering
  \includegraphics[width=80mm]{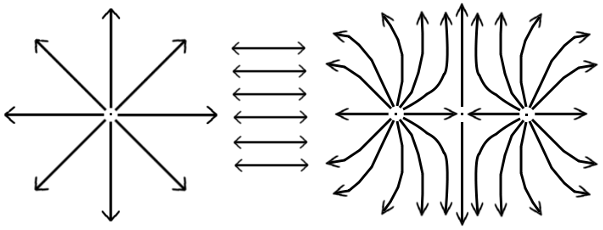}
  \caption{A source cobordism}
  \label{fig:CobordismProcess}
\end{figure}

\subsection{Primitive Vector Fields}

Consider the effect of cobordisms on the graph $\tilde{G}_I(v)$. Sink-cobordisms act on two adjacent (and distinct!) vertices and an edge between them by removing the edge and equating the two vertices, so that all the edges of each vertex are now assigned to the new vertex. Source-cobordisms take an edge between two (distinct!) regions and remove it, equating the regions. In this way, cobordisms are not only operations on vector fields, but also on connected graphs embedded on $X$.

\begin{lemma}
Every primitive vector field can be reduced via cobordisms to a vector field with just one source and one sink.
\label{lem:OneGraph}
\end{lemma}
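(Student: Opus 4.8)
The plan is to track the effect of cobordisms on the graph $\tilde{G}_I(v)$ and to argue by induction on $U + I$. Since $U + I = A + 2 - 2g$ by Poincaré--Hopf and every cobordism lowers $A$ by exactly one, inducting on $U + I$ is the same as inducting on the number of saddles. As recorded just before the statement, a sink-cobordism acts on $\tilde{G}_I$ by contracting an edge joining two \emph{distinct} vertices (lowering $I$), while a source-cobordism acts by deleting an edge separating two \emph{distinct} regions (lowering $U$); both lower $A$. Because a primitive flow on a compact surface has a nonempty attracting set and a nonempty repelling set, we always have $I \ge 1$ and $U \ge 1$, so $U + I \ge 2$ with equality precisely when $U = I = 1$. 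It therefore suffices to show that whenever $U + I > 2$ at least one cobordism is available, so that we can keep lowering $U + I$ until it reaches its minimum.

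So suppose $U + I > 2$; then $I \ge 2$ or $U \ge 2$. First I would treat $I \ge 2$. By Theorem \ref{thm:GConnect} the graph $\tilde{G}_I$ is connected, and any connected graph on at least two vertices has an edge with distinct endpoints (if every edge were a loop the vertex set would split into isolated pieces). Contracting such an edge is an available sink-cobordism, and it lowers $I$. For $U \ge 2$ I would dualize: by the remark following Theorem \ref{thm:GConnect}, $\tilde{G}_U$ is the dual of $\tilde{G}_I$ on $X$, and the version of the theorem for $\tilde{G}_U$ makes it connected on $U \ge 2$ vertices, hence possessing a non-loop edge. Under the duality this edge is a saddle joining two distinct sources, i.e. an edge of $\tilde{G}_I$ with distinct regions on its two sides; such an edge is not a bridge, so deleting it both keeps $\tilde{G}_I$ connected and merges exactly two regions. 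This deletion is an available source-cobordism, and it lowers $U$. In either case $U + I$ drops by one.

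The step I expect to be the main obstacle is keeping the hypotheses of Theorem \ref{thm:GConnect} in force at \emph{every} stage, since the argument above requires $\tilde{G}_I$ and its dual to be connected after each cobordism, not merely at the outset. I would secure this by checking that each cobordism again yields a primitive vector field, so that Theorem \ref{thm:GConnect} reapplies to the new graph; this can be read off the local models, since contracting an edge simply identifies two vertices (leaving the faces disks), while deleting a non-bridge edge glues two disk regions along an arc into a single disk region. Finally I would confirm the correct terminal configuration: once $U = I = 1$ no cobordism applies, because a sink-cobordism needs two distinct sinks and a source-cobordism needs two distinct regions, and Euler's formula (or Poincaré--Hopf) then forces the surviving $A = 2g$ saddles to appear as loops at the single vertex of $\tilde{G}_I$. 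Thus the reduction halts exactly at one source and one sink; in particular we neither can nor should remove these last $2g$ saddles, which for $g > 0$ are genuinely present in the minimal configuration.
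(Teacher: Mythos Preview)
Your argument is correct and follows essentially the same route as the paper: use the connectivity of $\tilde{G}_I$ (respectively $\tilde{G}_U$) from Theorem~\ref{thm:GConnect} to locate a non-loop edge whenever $I\ge 2$ (respectively $U\ge 2$), and contract or delete it via the appropriate cobordism. You supply more care than the paper does about the induction and about why the hypotheses persist after each step, but the underlying idea is identical.
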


\begin{proof}
Suppose a vector field with more sinks (for example) could not be further reduced via cobordisms. But $\tilde{G}_I$ is connected by Theorem 1, so there are two adjacent distinct vertices with an edge between them, and we can reduce via a sink-cobordism. On the other hand, all primitive vector fields have infinitely many integral curves that need a place to start and a place to end, so every primitive vector field must contain at least one sink and one source. 
\end{proof}

Call such a primitive vector field with just one source and one sink {\it reduced}. To what extent can these differ? The Poincare-Hopf theorem tells us that the number of saddles is specified at $2g$. Each saddle must connect twice to the source and twice to the sink. However, we know that $G$ is a triangulation of $X$, so we see that the number of homotopy classes of integral curves between the sink and the source is thus fixed at $4g$ (provided $g>0$). Thus, given the manifold's genus $g$, there is only one possible graph $G(w)$ for $w$ reduced. 
Care must be taken, however, as $G$ may embed in multiple ways into $X$. In fact, if we assign a certain embedding to the identity element in the mapping class group of $X$, then the different embeddings of $G$ correspond to elements of the mapping class group. There is not, therefore, necessarily only one cobordism class of primitive vector fields.

It is true, however, that if $X$ is a sphere, all primitive vector fields are cobordant. As above, all primitive vector fields are certainly cobordant to a reduced vector field with just one sink, one source, and one homotopy class of integral curves between them. As the mapping class group of the sphere is trivial, we can deform all such vector fields into each other, so there is just one cobordism class.

\begin{thm} 
There is just one cobordism class of primitive vector fields on the sphere.
\end{thm}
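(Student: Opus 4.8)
The plan is to assemble the statement from three facts already established in the excerpt: (1) by Lemma \ref{lem:OneGraph}, every primitive vector field on the sphere is cobordant to a \emph{reduced} one, i.e.\ a primitive vector field with exactly one sink and one source; (2) on the sphere $g=0$, so Poincar\'e--Hopf gives $U+I-A = 2$, whence a reduced vector field has $A = 0$ saddles and the graph $G$ consists of a single source, a single sink, and the integral curves joining them; (3) any two embeddings of such a configuration in $S^2$ are isotopic because the mapping class group of the sphere is trivial. Stringing these together yields a single cobordism class.

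\begin{proof}
Let $v$ be any primitive vector field on the sphere $X = S^2$. By Lemma \ref{lem:OneGraph}, $v$ is cobordant to a reduced primitive vector field $w$, that is, one with exactly one sink and one source, so $I = U = 1$. Since $X$ is the sphere, $g = 0$, and the Poincar\'e--Hopf relation $U + I - A = 2 - 2g$ forces $A = 0$; thus $w$ has no saddles. Consequently the graph $G(w)$ has exactly two vertices, one sink and one source, and its edges are the homotopy classes of integral curves running from the source to the sink.

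It remains to see that any two such reduced vector fields are cobordant, and since cobordisms between them would be trivial once their flows are isotopic, it suffices to show that all reduced primitive vector fields on $S^2$ are isotopic. Removing the source and sink, the complement of these two points in $S^2$ is an annulus, and the integral curves foliate it; structural stability (no periodic orbits, since $w$ is primitive) forces every such curve to run from the source to the sink, so there is a single homotopy class of such curves and the complement of $G(w)$ in $X$ is a single disk. Hence $G(w)$, together with its embedding, is the standard picture of Figure \ref{fig:SphereExample}: two poles joined by a one-parameter family of meridians. Any two embeddings of this configuration differ by a self-diffeomorphism of $S^2$, and since the mapping class group of $S^2$ is trivial, such a diffeomorphism is isotopic to the identity. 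Carrying the isotopy along, $w$ is isotopic, hence cobordant, to the vector field of Figure \ref{fig:SphereExample}. Therefore every primitive vector field on $S^2$ is cobordant to this fixed one, and there is exactly one cobordism class.
\end{proof}

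The only genuine subtlety, and the step I expect to require the most care, is the claim that a reduced vector field on $S^2$ is \emph{unique up to isotopy} rather than merely having a unique abstract graph $G$: one must rule out any exotic foliation of the complementary annulus and then invoke triviality of $\mathrm{MCG}(S^2)$ to upgrade ``same combinatorial data'' to ``isotopic vector fields.'' Everything else is a direct application of Lemma \ref{lem:OneGraph} and Poincar\'e--Hopf.
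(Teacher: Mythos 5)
Your proposal is correct and follows essentially the same route as the paper: reduce via Lemma \ref{lem:OneGraph} to a vector field with one source and one sink (hence no saddles on the sphere by Poincar\'e--Hopf), and then use triviality of the mapping class group of $S^2$ to conclude all such reduced fields are isotopic. The extra care you take in identifying the complementary annulus and its foliation simply fills in details the paper leaves implicit.
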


We now show that there is one cobordism class for the torus and we conjecture that there is also only one cobordism class for surfaces of higher genus.

\subsection{Primitive Vector Fields on a Torus}

Suppose we have two vector fields $v, w$ on $X$ such that the natural embeddings of $G(v)$ and $G(w)$ are isotopic. Because these graphs are triangulations, it is clear that there is an isotopy that takes $v$ to $w$. 

Next, we suppose that we have two vector fields $v, w$ such that the natural embeddings of $\tilde{G}_U(v)$ and $\tilde{G}_U(w)$ are the same. By Corollary \ref{cor:triangulation}, each region made by these graphs is diffeomorphic to $\mathbb{R}^2$, and there is exactly one sink in each of these regions. Because the mapping class group of $\mathbb{R}^2$ is trivial, there is only one way to put one sink in each region up to isotopy. In other words, we conclude that $G(v)$ is isotopic to $G(w)$, which in turn implies that $v$ is isotopic to $w$.

Call the complete tripartite graph with one source, one sink, and two saddles $G$.
By lemma \ref{lem:OneGraph}, we know that every vector field is cobordant to a vector field $v$ such that $G(v) = G$ as a graph. $\tilde{G}_U$ is just the graph with one vertex and two edges. Therefore, to show that every vector field on the torus is cobordant, it is sufficient to show that any embedding of $\tilde{G}_U$ is cobordant to every other embedding. In particular it is sufficient to show that the simple embedding in Figure \ref{fig:SimpleExample} is cobordant to every other embedding. Suppose we have an embedding $E$ that is not isotopic to this one. There is some element of the mapping class group of the torus that takes our simple embedding to $E$. If two embeddings correspond to the same element of the mapping class group, there is an isotopy taking one to the other. Therefore, we want to show that we can use cobordisms to reach an embedding corresponding to any element of the mapping class group. 

\begin{figure}
  \centering
  \includegraphics[width=40mm]{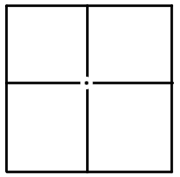}
  \caption{A simple embedding of $\tilde{G}_U$: Note, we represent the torus as a square with opposite edges set equal}
  \label{fig:SimpleExample}
\end{figure}

However, we can perform the series of cobordisms shown in Figure \ref{fig:DehnTwist}. In this way, we have found two embeddings that are not isotopic but are cobordant. In fact, we notice that Figure \ref{fig:DehnTwist} simply gives us a Dehn twist on the torus along one of the two edges $e_1$ and $e_2$ of the graph. Therefore, we want to show that we can reach every element of the mapping class group by performing Dehn twists along these two edges. Unfortunately, we have a slight complication. As we perform these Dehn twists, the edges along which we perform the twists move. To deal with this problem, we draw two curves $C_1$ and $C_2$. We draw $C_1$ (resp. $C_2$) where $e_1$ (resp. $e_2$) starts out in Figure \ref{fig:SimpleExample}. As we perform the Dehn twists around $e_1$ and $e_2$, we move both $e_1$ and $e_2$ around, but we leave $C_1$ and $C_2$ stationary. Now, we also use $g_1$ and $g_2$, the Dehn twists around $C_1$ and $C_2$. Note that $g_1$ and $g_2$ are elements of the mapping class group of the torus. Suppose that through Dehn twists around $e_1$ and $e_2$ we have obtained an element $x$ of the mapping class group of the torus. How can we describe another Dehn twist around $e_1$. Because $e_1$ has been moved by $x$, we must have that a Dehn twist around $e_1$ acts like the element $xg_1x^{-1}$ of the mapping class group. Applying this to $x$, we get $xg_1$. Thus, we see that performing a Dehn twist around $e_1$ is the same as multiplying on the right by $g_1$. The same can be said about $e_2$ and $g_2$. However, we know by the Lickorish Twist Theorem \cite{L} that $g_1$ and $g_2$ generate the mapping class group of the torus. It follows that we can get every element of the mapping class group by performing Dehn twists around $e_1$ and $e_2$. Therefore, we must have that the torus has only one cobordims class, as we desired. 

\begin{figure}
  \centering
  \includegraphics[width=100mm]{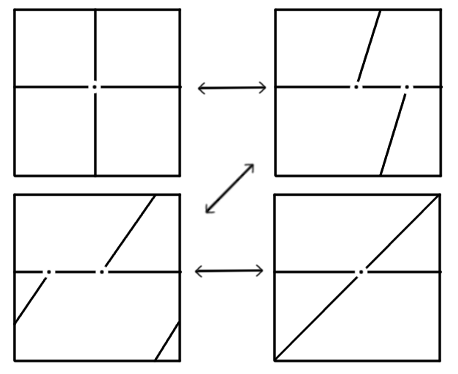}
  \caption{A series of cobordisms and isotopies}
  \label{fig:DehnTwist}
\end{figure}

\subsection{Non-Primitive Vector Fields}

When allowing for periodic curves, the two cobordisms we have defined will certainly not be sufficient to alter or remove periodic curves, thus creating an infinite class of cobordism classes. To account for this, we propose an additional three classes of cobordisms, each of which is in spirit a local collapsing of two objects, just like our previous cobordisms.

Consider a periodic curve which borders a region and acts as a source in that region. Consider also a point source in the region and a saddle which connects to both the point source and the periodic curve. Once again, we can continuously deform the two points together and into a neighborhood of the periodic curve, such that the periodic curve, when deformed slightly to encompass the two points, continues to act as a source. As above, we can then delete the point source and saddle and treat their integral curves as integral curves of the periodic curve. There is of course a symmetric cobordism with a sink-like periodic curve, a point sink, and a saddle.

These cobordisms provide a mechanism for handling periodic curves, but they still do not allow us to remove periodic curves, which is essential to avoid large sets of cobordism classes. Thus we propose a final cobordism, not quite in the spirit of the others. Given a region of genus 0 bordered by exactly one periodic curve which acts as a source (sink), and containing exactly one zero point, we propose that we can shrink the periodic curve to a point, swallowing the zero point inside, and leaving the remains of the periodic curve to act either as a sink or source depending on the original behavior of the periodic curve outside this region. This allows us to `swallow' periodic curves provided their interiors are as simple as possible (reduced, effectively). This operation is shown in Figure \ref{fig:PCCobordism}.

\begin{figure}
  \centering
  \includegraphics[width=80mm]{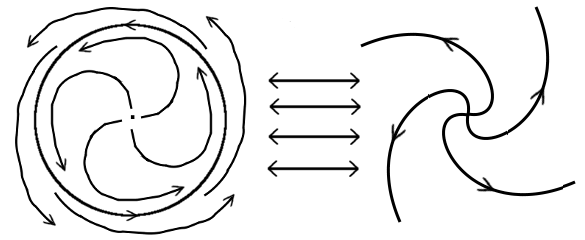}
  \caption{A basic periodic curve cobordism}
  \label{fig:PCCobordism}
\end{figure}

\begin{thm} All structurally stable vector fields on the sphere are cobordant to each other.
\end{thm}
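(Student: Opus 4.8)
The claim follows by combining the theorem just proved for primitive vector fields on the sphere with a procedure for removing periodic orbits one at a time using the three new cobordisms. So the plan is: show that \emph{every} generic vector field $v$ on $S^2$ is cobordant to a primitive one, and then invoke the fact that there is a unique cobordism class of primitive vector fields on the sphere. The first step I would do by induction on the number $k$ of periodic orbits of $v$; the case $k=0$ is vacuous, so assume $k\ge 1$.

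The periodic orbits of $v$ are finitely many disjoint smooth simple closed curves on $S^2$, hence nested, so at least one of them, say $\gamma$, is \emph{innermost}: one of the two disks it bounds, call it $D$, contains no other periodic orbit. Then $D$ is one of the regions of Section 2.4, bounded by the single curve $\gamma$, and closing $\gamma$ off produces a primitive vector field $v_D$ on a sphere $S_D = D \cup \{p\}$, where the new point $p$ is a source or a sink according as $\gamma$ is repelling or attracting from inside $D$; say for definiteness that $p$ is a source.

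Now apply Lemma~\ref{lem:OneGraph} to $v_D$: it is cobordant, through source- and sink-cobordisms on $S_D$, to a reduced field with exactly one source, one sink, and (since the genus is $0$) no saddles. Each such cobordism lifts to an allowed cobordism of $v$ on $S^2$ performed inside $D$: a source- or sink-cobordism of $v_D$ not involving $p$ is literally a source- or sink-cobordism of $v$, while one that does involve $p$ (necessarily a source-cobordism, as $p$ is a source) is exactly a periodic-curve cobordism of the first new type, in which the source-like periodic curve $\gamma$ absorbs a point source and a saddle. Since any source-cobordism involving $p$ returns $p$ again, and a reduced field on $S_D$ has only one source, that surviving source must be $p$ itself; hence, back on $S^2$, after these cobordisms the region $D$ is bounded by $\gamma$, contains exactly one zero of $v$ in its interior (a single sink), has genus $0$, and contains no other periodic orbit. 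This is precisely the hypothesis of the periodic-curve swallowing cobordism: shrinking $\gamma$ to a point and swallowing the interior sink produces a generic field with $k-1$ periodic orbits that is cobordant to $v$. By the inductive hypothesis this field is cobordant to a primitive one, hence so is $v$. Combined with the uniqueness of the cobordism class of primitive vector fields on $S^2$, this proves the theorem. (The case where $p$ is a sink rather than a source is symmetric, interchanging the roles of sources and sinks throughout.)

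The main obstacle is the bookkeeping in the third step: one must verify carefully that the reduction of the auxiliary field $v_D$ on $S_D$ can always be realized using only cobordisms that lift to the five allowed operations on $S^2$, and in particular that the closed-off point $p$ is the zero that survives the reduction, so that the innermost region is left with exactly one interior zero and the swallowing cobordism genuinely applies. One should also dispose of the degenerate situations — when $v|_D$ is already reduced, or when $\gamma$ is the only periodic orbit so that its outer side needs no further modification — but these are routine.
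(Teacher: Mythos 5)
Your proposal is correct and follows essentially the same route as the paper: induct on the number of periodic orbits, pick a region bounded by a single periodic curve, close it off to a sphere, reduce the resulting primitive field to one source and one sink (with the closed-off point surviving), and then apply the swallowing cobordism to eliminate the periodic curve. Your write-up is in fact more careful than the paper's, since you explicitly verify that the reduction cobordisms on the auxiliary sphere lift to the allowed operations on $S^2$ and that the closed-off point is the zero that survives — points the paper asserts with the single sentence that ``all cobordisms of the new sphere are proper cobordisms of the original sphere.''
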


\begin{proof} 
If there are no periodic curves, we are done as above. We will proceed by induction on the number of periodic curves, assuming there are $n$ periodic curves in the smallest counterexample to the theorem. Certainly, we can always find a region bounded by just one periodic curve. Treating this region itself as a sphere, we note that all cobordisms of the new sphere are proper cobordisms of the original sphere. Therefore we can reduce this to the vector field with just one sink and one source, one of which corresponds to the original periodic curve. Then we can use our final cobordism to remove the periodic curve, thus contradicting the induction hypothesis.
\end{proof}

\section{Conclusion}
Given an appropriate vector field $v$ on a surface $X$, we have developed the combinatorial tool of a graph $G$ on $X$ composed of homotopy classes of integral curves of $v$. By reducing the graph $G$ to just sinks or just sources and then rebuilding, we have shown that $G$ is a triangulation of $X$. Using this fact, combined with the tools of the sink and source-only graphs, we have demonstrated that vector fields on the sphere may be transformed into each other via basic localized operations which we call cobordisms.

Unfortunately, the presence of periodic integral curves keeps us from generalizing to higher genus surfaces. This motivates a number of open questions:

\begin{itemize}

\item Are all primitive vector fields on surfaces of higher genus cobordant?

\item How many cobordism classes of generic vector fields exist for surfaces of fixed genus with fixed periodic curve configurations?

\item How many cobordism classes of generic vector fields exist for surfaces of fixed genus with fixed numbers of periodic curves?

\item What can be said about vector fields which are not structurally stable?

\end{itemize}

\section{Acknowledgements}
The authors would like to acknowledge the generous support of Professor Victor Guillemin, who gave the initial idea for this research and has provided direction for us many times. We would also like to thank John Reed, chairman of the MIT corporation, for funding this research. Finally, we would like to thank the MIT Undergraduate Research Opportunities Program for making undergraduate research like this possible.

\end{document}